\theoremstyle{definition}
\newtheorem{thm}{Theorem}[section]
\newtheorem{propo}{Proposition}[section]
\newtheorem{defn}{Definition}[section]
\newtheorem{lemma}{Lemma}[section]
\renewcommand{\iff}{\leftrightarrow}
\newcommand\CC{\mathcal{C}}
\newcommand\RR{\mathbb{R}}
\newcommand\PP{\mathbb{P}}
\numberwithin{equation}{subsection}
\title{On Rational Jurisprudence: A Problem in Bayesian Confirmation Theory}
\author{Reid Dale \\ \href{mailto:reiddale@berkeley.edu}{reiddale@berkeley.edu} }
\date{September 2022}
\begin{document}
\maketitle

\begin{abstract}
    This paper is concerned with the epistemic question of \textit{confirming} a hypothesis---the guilt of a defendant---by way of testimony heard by a juror over the course of an American-style criminal trial. In it, I attempt to settle a dispute between two strands of the legal community over the issue of whether the methods of Bayesian rationality are incompatible with jurisprudential principles such as the Presumption of Innocence. To this end, I prove a representation theorem that shows that so long as a juror would \textit{not} convict the defendant having heard no testimony (the Presumption of Innocence) but would convict upon hearing \textit{some} collection of testimony (Willingness to Convict), then this juror's disposition to convict the defendant is representable as the disposition of a Bayesian threshold juror in Posner's sense. This result indicates that relevant notion of a Bayesian threshold juror is insufficiently specified to render this debate a substantive one.
\end{abstract}

\section{Introduction} 

In \textit{State v. Skipper} \autocite{1994skipper}, the Supreme Court of Connecticut ruled that Bayesianism directly conflicts with the presumption of innocence, stating:
\begin{quote}
Because Bayes' Theorem requires the assumption of a prior probability of paternity, i.e., guilt, its use is inconsistent with the presumption of innocence in a criminal case such as this.... If we assume that the presumption of innocence standard would require the prior probability of guilt to be zero, the probability of paternity in criminal cases would always be zero.... In other words, Bayes' Theorem can only work if the presumption of innocence disappears from  considerations. \autocites(at 623){1994skipper} 
\end{quote}

Thus, the court argues, the jury cannot simultaneously hold the presumption of innocence and update their credences according to Bayes' rule without trivializing the enterprise of criminal trial by licensing \textit{only} verdicts of ``not guilty.'' Committed to the presumption of innocence, the court therefore rejects the use of Bayesian inference by the jury in a criminal setting. This caution is shared by many legal scholars, most notably Tribe \autocite{tribe1970trial}, who argues that many forms of probabilistic reasoning in the trial setting---including Bayesian inference---violate the presumption of innocence.

On the other hand, the economic analysis of law undertaken by Judge Richard Posner \autocite{posner1998evidence} suggests that a trier of fact \textit{ought} to be Bayesian. Posner models the ideal juror as an instance of what I term a \textit{Bayesian threshold juror}: an agent who updates her credence of guilt according to Bayes' rule and moves to convict just in case, at the conclusion of the trial, the credence she assigns to guilt is above some threshold value $\theta$ sufficiently close to 1. To ameliorate the worries of those skeptical of the Bayesian paradigm's compliance with jurisprudential norms such as the presumption of innocence, Posner proposes a simple solution: require of the juror that her credence of guilt at the outset of the trial is exactly $50\%$. 

In the following sections I will argue that such restrictions on an ideal Bayesian juror fail to meaningfully constrain a juror's disposition to render a conviction or acquittal. My argument relies on a formalization of the notion of ``Bayesian juror,'' wherein a juror's disposition to convict based on an observed sequence of testimonies is modeled as a function
\[f: \mathcal{T}_S \to \{C, A\} \]
where $\mathcal{T}_S$ is a set of collections of testimony that can be presented in a court of law, $C$ represents ``conviction,'' and $A$ represents ``acquittal.'' The disposition function $f$ is interpreted as $f(T) = C$ just in case, having heard all and only testimony $T = \{t_1,\dots, t_n\}$ over the course of the trial, the juror would vote to convict. That the juror's disposition be a function of the collection of testimony heard over the course of the trial is essential to modeling American criminal trials, as this is taken to be a constitutional right of the defendant, as described in \textit{Turner v. Louisiana}:

\begin{quote} The requirement that a jury's verdict ‘must be based upon the evidence developed at the trial’ goes to the fundamental integrity of all that is embraced in the constitutional concept of trial by jury...

In the constitutional sense, trial by jury in a criminal case necessarily implies at the very least that the ‘evidence developed’ against a defendant shall come from the witness stand in a public courtroom where there is full judicial protection of the defendant's right of confrontation, of cross-examination, and of counsel.\autocite{1965turner}
\end{quote} 

By testimony I refer not to the content of an agent's testimony but to information of the form ``Alice testified that \textit{S}, to which the defendant's attorney objected on grounds $X$, $Y$, and $Z$.'' Events such as this can effect a change in a juror's belief that \textit{S}, but the juror does not herself witness $S$. The fundamental assumption I make is
\begin{quote} 
\textbf{Testimonial Consistency Axiom (TCA):} Let $\mathcal{T}_S$ be a collection of possible testimonial events. For all subsets $T\in \mathcal{T}_S$, both the guilt and the innocence of the defendant are consistent with $T$.
\end{quote} 
For example, while the \textit{semantic content} of a witness' testimony may be inconsistent with either the guilt or innocence of the defendant, the witness' act of testifying to that effect is consistent with both guilt and innocence. After all, such a witness may be mistaken or lying. While there are some apparent violations of this assumption (e.g., a witness testifying that the defendant had murdered him), it generally holds true in the actual trial context. 

At minimum, the \textbf{Presumption of Innocence} (PoI) places the following constraint on a juror's disposition to convict at the outset of the trial: absent any testimony, the juror must not convict. In the notation of dispositions,
\[ f(\varnothing) = A.\]
Moreover, it is assumed that \textit{some} set of testimony would compel a juror to convict; in other words, there exists a set $T \in \mathcal{T}_S$ of testimony such that
\[ f(T) = C. \]
Call this the \textbf{Willingness to Convict} (WtC).

Using this formalism, I critically evaluate the interactions between Bayesianism and contemporary American legal theory. To this end, I focus on two key questions: 
\begin{enumerate} 
\item Are Posner's Bayesian threshold jurors rational agents in that they maximize the expectation of some utility function?
\item Does Posner's model of Bayesian threshold jurors materially constrain a juror's disposition beyond the (PoI) and (WtC)?
\end{enumerate}

Using recent work of Easwaran \autocite{easwarantruthlove}, I argue that the answer to Question 1 is ``Yes'' by exhibiting a utility function that a Bayesian threshold juror optimizes. However, by using the (TCA) I argue that the answer to Question 2 is ``No''; while Bayesian threshold jurors are rational qua the standard decision-theoretic account of rationality, \textit{all} dispositions satisfying (PoI) and (WtC) can be realized as the disposition of some Bayesian threshold Juror. 

This result calls into question the utility of modeling an ideal juror's inferential structure as a Bayesian threshold juror, as the disposition of \textit{any} juror satisfying (PoI) and (WtC) is rationalizable. For instance, consider the disposition that renders a conviction so long as at least two witnesses testify, regardless of the content of the testimony. This disposition satisfies the (PoI) and the (WtC), and by the representation theorem is represented by a Bayesian threshold juror. Such a person would be ill-suited to be a juror, yet the Posnerian account cannot rule them out.

A natural response to this result would be to add further constraints to rule out such contrived dispositions. In the final section of this paper multiple proposals in this vein are analyzed. As we will see, each of these proposals avail themselves to serious objections on both epistemic and jurisprudential grounds.

Ultimately, existing proposals to give probabilistic foundations to normative legal reasoning fail to do so, and the truth of principles such as (TCA) cast serious doubt that any such approach can succeed.

\section{A Formal Model of a Juror's Reasoning}

In this section I present the formal model, partially sketched in Section 1, of a juror's reasoning in a criminal trial. The atoms in this model are individual testimonies. I use the word ``testimony'' here very broadly: it can include any sensory information that a juror perceives during the course of the trial that may inform her rendering of a verdict, including the statements by witnesses, legal counsel, and judges made during the trial as well as qualitative information such as the demeanor and body language of the witness.

\subsection{Juror Dispositions}

A transcript $T$ of a trial is a collection of testimonies, to be understood as the contents of a trial as perceived by the juror. Given a collection $S$ of possible testimonies, the set $\mathcal{T}_S = \mathcal{P}(S)$ is the set of transcripts over $S$.

Recall from Section 1 that a juror's disposition to convict (hereafter ``disposition'') is simply a function
\[f: \mathcal{T}_S \to \{C,A\} \]
where $f(T) = C$ (respectively $A$) is read as ``the juror is disposed to convict (respectively acquit) on the basis of transcript $T$.''

So far, this model does not include any consideration of the material guilt or innocence of the defendant. To remedy this, we define a set of possible worlds relative to a set $S$ of possible testimonies by setting
\[W_S = \mathcal{T}_S \times \{G,I\},\]
where $G$ is shorthand for ``materially guilty'' and $I$ is shorthand for ``materially innocent'' of the charges alleged by the prosecution. In other words, a world is a pair $w = (T,x)$ consisting of a transcript $T$ and a value $x\in \{G,I\}$ corresponding to the material guilt or innocence of the defendant. It is critical to note that we are licensed in including both a world $(T,G)$ and $(T,I)$ by the axiom (TCA), which ensures that both material guilt and innocence are consistent with transcript $T$.

On $W_S$ we define two distinguished classes of events. First, for a transcript $T$ define 
\[ E_T = \{ (T,G), (T,I) \},\]
the two-world set consisting of a transcript $T$ and the two possible values for guilt, $G$ and $I$. Then $E_T$ is the event of receiving transcript $T$ from the trial. Second, define
\[E_G = \{w\,|\, w = (T,G) \text{ for some } T\in \mathcal{T}_S \}, \]
the collection of worlds where the defendant is materially guilty. Thus, $E_G$ is the event of the defendant being materially guilty.


\section{Bayesian Analysis in the Law} 

In this section some of the key literature surrounding the interaction between Bayesian epistemology and legal epistemology is reviewed from a formal perspective.

Throughout, I make two crucial assumptions. First, I assume that all evidence is presented as testimony. At first blush this may seem to be an unrealistic assumption, but in court all physical evidence is accompanied by some testimony authenticating or otherwise speaking to its relevance and veracity. For example, merely exhibiting a firearm during the course of a murder trial bears little relevance to the case at hand \textit{unless} someone testifies to salient facts concerning, for instances, its ownership, fingerprints found on the firearm, and the matching of the firearm to bullets recovered at the scene of the crime.

Second, I assume that the guilt of a defendant is \textit{materially} independent from the act of any witness testifying. This is of course not to say that testimony lends no inductive weight to the case at hand, but rather that such a testimonial act never \textit{necessitates} guilt or innocence.\footnote{This might not hold if, for instance, the defendant is charged with murder of person $A$ and then $A$ testifies during the trial. Then the act of $A$ testifying contradicts the guilt of the defendant.}

This is not to say that there are no logical inferences to be made regarding the \textit{probabilities} of guilt and the veracity of the testimonies, only that there is no direct deductive relation between them. 

\subsection{Fundamentals of Bayesian Inference}

To set the stage, we review the basics of the Bayesian account of rationality. Let $S$ be a collection of sentences containing ``true'' $\top$, ``false'' $\bot$, and closed under Boolean operations; namely, conjunction $\wedge$, disjunction $\vee$, and negation $\neg$. For the purposes of this paper we will assume that $S$ is finite.

We assume that the semantics of the sentences in $S$ is understood extensionally; in other words, the sentence $S$ is identified with the collection 
of worlds $w$ in some ambient universe of possible worlds $W$ such that $s$ holds true in $w$. Under this semantics we may think of a subset $T\subset S$ of sentences as its corresponding \textit{event} \[E_T = \{w\in W\,|\, T \text{ is true in } w\}.\]

The Bayesian model of rationality supposes that each agent $A$ is equipped at the outset with a \textit{prior} $\PP$, which is a certain kind of mathematical object that encodes the degree of belief, or \textit{credence},  they afford each sentence $s\in S$. The structure of this prior $\PP$ is that of a probability measure on $S$:

\begin{defn}
A probability measure on a set $S$ of sentences is a function
\[\PP: \mathcal{P}(S) \to [0,1] \]
such that
\begin{enumerate}
    \item $\PP(E_{\bot}) = 0$,
    \item $\PP(E_{\top}) = 1$, and 
    \item If $E_T \cap E_{T'} = \varnothing$, then 
    \[\PP(E_T)+\PP(E_{T'}) =  \PP(E_T \cup E_{T'}). \qedhere\]
\end{enumerate}
\end{defn}

The Bayesian model of rationality requires that as evidence accumulates, the agent $A$ updates her degrees of belief on the basis of taking conditional probabilities:

\begin{defn}
Let $E$ be an event and $\PP$ a prior. The \textit{posterior distribution} of $\PP$ given $E$ is the conditional probability measure defined on $\mathcal{P}(S)$ given by
\[\PP_E(T) = \frac{\PP(E\cap T)}{\PP(E)}. \qedhere \]

\end{defn}

For further details regarding the epistemic interpretation of conditional probabilities and the general theory of Bayesian Rationality, the reader is directed to the excellent survey by Earman \autocite{earman1992bayes}.

\subsection{\textit{State v. Skipper} and the Court's Error}

Recall from the introduction the argument presented by the Supreme Court of Connecticut against the use of Bayesian reasoning in the setting of a criminal trial:

\begin{quote}
Because Bayes' Theorem requires the assumption of a prior probability of paternity, i.e., guilt, its use is inconsistent with the presumption of innocence in a criminal case such as this.... If we assume that the presumption of innocence standard would require the prior probability of guilt to be zero, the probability of paternity in criminal cases would always be zero.... In other words, Bayes' Theorem can only work if the presumption of innocence disappears from  considerations. \autocites(at 623){1994skipper} 
\end{quote}

While I argue that this argument is incorrect, the precise \textit{way} in which it is incorrect motivates the definition of a juror's disposition. The court's argument seems to be:
\begin{enumerate}
\item \label{Premise:poi} (Presumption of Innocence) The defendant is to be presumed innocent until proven guilty.
\item \label{Premise:bayes} (Principle of Bayesian Inference) A Bayesian juror must update their beliefs according to Bayes' rule when presented with evidence during the course of the trial.
\item \label{Premise:exon} If the jury finds, after hearing a set of testimony $T$, the conditional probability of guilt given the testimony $\PP\left(E_G \,\middle\vert E_T\right) = 0$  then the jury must acquit.
\item \label{Premise:bayestriv} The presumption of innocence implies that any prior adopted by the jury must satisfy
\[ \PP(E_G) = 0. \]
\item (Conditioning) Conditioning by \textit{any} set of testimony $E_T$ will yield a posterior probability of $0$:
\[ \PP\left(E_G \,\middle\vert\ E_T\right) = 0\] 
\item (Conclusion) For any criminal case, the jury must acquit.
\end{enumerate}

That Premise 4 is a misunderstanding of Bayesian epistemology is discussed in \autocite{allen1994probability}, but the precise nature of this error is very illustrative of the difference between \textit{credence} and \textit{decision} that our framework of juror dispositions and Bayesian threshold jurors distinguishes between. On the court's view, in order for a Bayesian agent to presume innocence would mean that the Bayesian could not entertain the mere \textit{possibility} of guilt. This is duplicitous: criminal trials are predicated on countenancing the possibility of \textit{both} guilt and innocence at the outset. As we will see, Judge Richard Posner \textit{also} finds Premise 4 faulty, instead arguing that the Presumption of Innocence requires that $\PP(E_G) = \frac{1}{2}$. The remainder of this section will be spent analyzing his account of Bayesian threshold jurors.

\subsection{Posner's Even-Odds Proposal}

Among other things, Judge Richard Posner is in part known for his economic approach to the law. In particular, his analysis of factfinding centers on the use of Bayes' Theorem\autocite[1486]{posner1998evidence}. Posner does ``make clear at the outset that I do \textit{not} propose that juries or judges be instructed in the elements of Bayesian theory... The most influential model of rational decision making under conditions of ineradicable uncertainty... it can be of great help, as we shall see, in evaluating the rationality of rules of evidence.''\autocite[3]{posner1998evidence}
Nevertheless, Posner models jurors as agents whose credences form a probability measure, which are updated in light of new evidence stemming from the testimony offered during the course of a trial. Moreover, in this model Posner interprets the burden of persuasion and the burden of proof beyond a reasonable doubt probabilistically:
 
 \begin{quote}
    In the typical civil trial... it is enough to justify a verdict for the plaintiff that the probability that his claim is meritorious exceeds, however slightly, the probability that it is not... 
    
    Type I errors are more serious than Type II errors in criminal cases therefore are weighted more heavily in the former by the imposition of a heavy burden of persuasion on the prosecution... Judges when asked to express proof beyond a reasonable doubt as a probability of guilt generally pick a number between .75 and 0.95.\autocite[34-36]{posner1998evidence}
 \end{quote}
 
 Therefore, in the context of a criminal trial an ideal rational juror is modeled as:
 
 \begin{defn} A juror $j$ assessing the guilt $G$ of some defendant on the basis of testimony $T$  a collection of possible testimonial events is \textit{Bayesian} in case:
 is Bayesian just in case $j$
\begin{enumerate}[label=\roman*]
\item The juror has a prior probability measure $\PP:\mathcal{P}(W)\to [0,1],$ 
\item (Conditionalization) Assigns probability $\PP(E_G\,|E_T)$ to guilt when the juror has heard all and only the testimonies $T$, and 
\item ($\theta$-Verdict Rule) There is a fixed $\theta$ with $0.5 < \theta < 1$ such that the juror $j$ renders a conviction if and only if $\PP(E_G\,|\, E_T) \geq \theta$. \qedhere
\end{enumerate}  
\end{defn} 

It is important to note here that rendering a verdict of convict on the basis of exceeding a threshold $\theta$ requires some work to justify in the framework of classical decision theory since an explicit utility function is not presented. Somewhat recent results of Easwaran \autocite[828]{easwarantruthlove} provide such a utility function. The idea is to give a reward $R$ to an agent just in case that agent correctly believes that a proposition $P$ is true, and to give a penalty $-W$ if the agent incorrectly believes that $P$ is false. It is required that the agent believes at least one of $P$ or $\neg P$, but not both. In our current context we may interpret ``belief'' as ``votes to convict'' and ``disbelief'' as ``votes to acquit.''

\begin{defn}

A \textit{doxastic state} on a set $S$ of sentences closed under Boolean operations is a function
\[d:S\to \{0,1\} \]
such that $d(s) = 1$ implies $d(\neg s) = 0$.

Let $s$ be a proposition and $R,W > 0$ be real numbers. The $(R,W)$-weight of $s$ is defined as
\begin{equation*}
        \eta_{R,W} (s)= \begin{cases}
            R & \text{if $s$ is true}, \\
            -W & \text{if $s$ is false.}
        \end{cases}
\end{equation*}

The \textit{score} of a doxastic state is given by
\[ \sigma_{R,W}(f) = \sum\limits_{s\in S} d(s) \eta_{R,W}(s). \qedhere\]

\end{defn}

A doxastic state encodes the a binary belief function: if $d(s) = 1$ then the agent believes $s$, and if $d(s) = 0$ then the agent does \textit{not} believe $s$. The score of the doxastic state encodes the correctness of the agent's doxastic state.

Easwaran shows that 
\begin{thm} \autocite[828]{easwarantruthlove}
For a given probability function $\PP$, a doxastic state maximizes expected score iff it believes all propositions $s$ such that $\PP(s) > \frac{W}{R+W}$ and believes no propositions $s$ such that $\PP(s) < \frac{W}{R+W}$. Both believing and not believing are compatible with maximizing expected score if $\PP(s) = \frac{W}{R+W}$.
\end{thm}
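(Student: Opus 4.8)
The plan is to compute the expected score as an explicit linear function of the doxastic state $d$ and then maximize it coordinate by coordinate. Writing $\mathbb{E}_{\PP}$ for expectation over the truth values of the sentences under the prior $\PP$, linearity of expectation gives
\[ \mathbb{E}_{\PP}\!\left[\sigma_{R,W}(d)\right] = \sum_{s\in S} d(s)\,\mathbb{E}_{\PP}\!\left[\eta_{R,W}(s)\right] = \sum_{s\in S} d(s)\bigl((R+W)\PP(s) - W\bigr), \]
since $\mathbb{E}_{\PP}[\eta_{R,W}(s)] = R\,\PP(s) - W(1-\PP(s))$. Setting $g(s) := (R+W)\PP(s) - W$, the coefficient $g(s)$ is strictly positive exactly when $\PP(s) > \tfrac{W}{R+W}$, strictly negative exactly when $\PP(s) < \tfrac{W}{R+W}$, and zero exactly at the threshold. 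So maximizing expected score amounts to maximizing $\sum_{s} d(s)\,g(s)$ over all admissible $d$, and the threshold value $\tfrac{W}{R+W}$ already appears as the sign-change point of the coefficients.

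The only constraint on $d$ is the doxastic consistency requirement $d(s)=1 \Rightarrow d(\neg s)=0$, which couples each sentence with its negation, so I would organize the maximization around the complementary pairs $\{s,\neg s\}$ that partition $S$. For each such pair the admissible choices are to believe neither, to believe $s$ alone, or to believe $\neg s$ alone, contributing $0$, $g(s)$, or $g(\neg s)$ respectively; hence the best attainable contribution from the pair is $\max\{0,\,g(s),\,g(\neg s)\}$, and any global maximizer is obtained by making a per-pair optimal choice independently on every pair. Reading off the optimizers then gives both directions of the biconditional at once: if $g(s)>0$ the unique per-pair maximizer believes $s$ and not $\neg s$; if $g(s)<0$ and $g(\neg s)<0$ the unique maximizer believes neither; and when $g(s)=0$ both believing and not believing $s$ are optimal, which is precisely the indifference asserted at $\PP(s)=\tfrac{W}{R+W}$.

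I expect the only real obstacle to be the bookkeeping around this consistency constraint: optimizing each $d(s)$ in isolation could in principle demand believing both a proposition and its negation, which is forbidden, so one must confirm that the prescribed state ``believe everything strictly above threshold, nothing strictly below'' is actually admissible. This reduces to an elementary observation. Because $E_{\neg s}$ is the complement of $E_s$, one always has $\PP(s)+\PP(\neg s)=1$; combined with $\tfrac{W}{R+W}\ge \tfrac12$ (equivalently $W\ge R$, which covers the juror threshold $\theta=\tfrac{W}{R+W}>\tfrac12$), this forces at most one member of each complementary pair to exceed the threshold, so whenever $g(s)>0$ we automatically have $g(\neg s)<0$ and the greedy coordinatewise choice is self-consistent. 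The pairwise decomposition is exactly what neutralizes the coupling; everything else—the linearity computation and the sign analysis of $g$—is routine.
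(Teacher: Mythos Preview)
The paper does not prove this theorem; it merely cites Easwaran and states the result. So there is no in-paper argument to compare against, and your linearity-plus-coordinatewise-maximization approach is exactly the natural one: compute $\mathbb{E}_{\PP}[\sigma_{R,W}(d)]=\sum_{s} d(s)\,g(s)$ with $g(s)=(R+W)\PP(s)-W$, observe that the only coupling in the feasible set is between $s$ and $\neg s$, and optimize pair by pair.

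One point deserves comment. You explicitly restrict your admissibility check to the regime $W\ge R$ (equivalently $\tfrac{W}{R+W}\ge \tfrac12$), noting that this is the case relevant to the juror threshold $\theta>\tfrac12$. That restriction is not merely a convenience: under the paper's definition of a doxastic state, which imposes the consistency constraint $d(s)=1\Rightarrow d(\neg s)=0$, the theorem as stated is \emph{not} literally correct when $R>W$. For if the threshold is below $\tfrac12$ one can have $\PP(s)$ and $\PP(\neg s)$ both strictly above it, and then the clause ``believes all propositions $s$ with $\PP(s)>\tfrac{W}{R+W}$'' would demand believing both $s$ and $\neg s$, which no admissible $d$ can do; the actual pairwise optimum there is to believe whichever of $s,\neg s$ has the larger $g$-value. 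Easwaran's original formulation does not carry this consistency constraint, so the discrepancy is an artifact of the paper's added condition. Your proof therefore covers precisely the range in which the statement, as written here, is true, and that range contains the only case the paper uses.
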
 

In our setting, a Bayesian juror votes to convict the defendant when the posterior probability of guilt exactly equal to the threshold. The above theorem says that either choice maximizes expected score. Thus, for a threshold $0<\theta<1$, a Posnerian juror maximizes expected $(1-\theta,\theta)$-score, and so Posnerian jurors are representable as a Bayesian agent with respect to \textit{some} utility function.

This choice of utility function, however, avails itself to criticism. One way to interpret the score function above in the judicial context is to identify with $W$ the average net social cost of wrongful conviction and $R$ the average net social benefit of a correct conviction. While simple in its expression, making decisions according to such a score function runs into some difficult challenges.

For instance, there is little reason to think that the overall values of $W$ and $R$ would be the same across different crimes. The variation amongst crimes for the values of $W$ and $R$ would therefore adjust the value of the probability threshold $\theta$. Therefore, if a juror is to render her verdict on the basis \textit{only} of the posterior probability of guilt, expected \textit{score} might be optimized but expected net social benefit will not.

One might object to this picture by saying that while it is true that the precise values of $W$ and $R$ vary from crime to crime, it is the judge that sentences the defendant and judges have a great deal of discretion in determining the sentence.
However, for many offenses mandatory minimum sentencing renders it impossible for the judge to appropriately calibrate the punishment of the defendant once convicted.

Beyond the Bayesian model of a juror's reasoning described above, Posner also proposes the following probabilistic interpretation of the Presumption of Innocence:

\begin{quote} 
Ideally we want the trier of fact to work from prior odds of 1 to 1 that the plaintiff or prosecutor  has a meritorious case... Although bias is clearest when the judge or jury not only has a prior belief about the proper outcome of the case but also holds the belief unshakably--that is, refuses to update it on the basis of evidence--it is not a complete response to a charge of bias that the judge or juror has an ``open mind'' in the sense of being willing to adjust his probability estimate in the light of the evidence presented at the trial. Any rational person will do that... His prior odds, if he is a Bayesian, will still have an influence on his posterior odds and hence... on his decision. \autocite[1514]{posner1998evidence}  
\end{quote} 

Posner's solution does guarantee a form of the Presumption of Innocence: provided that the threshold $\theta$ is chosen to be above $50\%$, no Bayesian juror conforming to the constraints that Posner outlines would convict absent any evidence. However, that is all it guarantees. Since testimony is presumed logically independent from the material facts at hand, it is perfectly consistent to ensure that no matter what testimony is afforded the juror will convict as soon as testimony of any kind is given. More formally:

\begin{propo}\label{prop:posner}
Let $0 < \theta < 1$ and that $T$ satisfies (TCA). Then there exists a prior probability $\PP$ such that 
\[\PP\left(E_G\right) = \frac{1}{2} \]
but for any nonempty collection $T$ of testimony
\[ \PP\left(E_G \,\middle\vert\, E_T\right) \geq \theta. \qedhere\]
\end{propo}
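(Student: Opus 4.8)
The plan is to construct $\PP$ directly as a finitely additive measure on the finite sample space $W_S = \mathcal{T}_S \times \{G,I\}$ by assigning a nonnegative weight to each world and extending additively; since $S$ is finite, any nonnegative weighting of the singletons summing to $1$ automatically satisfies the three clauses in the definition of a probability measure, so no measure-theoretic subtleties arise. Writing $p(T,G) = \PP(\{(T,G)\})$ and $p(T,I) = \PP(\{(T,I)\})$, the two targets unwind into purely algebraic constraints. The even-odds requirement $\PP(E_G) = \tfrac12$ becomes $\sum_{T} p(T,G) = \tfrac12$. For the posterior requirement, note $E_G \cap E_T = \{(T,G)\}$ and $\PP(E_T) = p(T,G) + p(T,I)$, so by the definition of conditioning $\PP(E_G \mid E_T) = \frac{p(T,G)}{p(T,G)+p(T,I)}$; hence, for each nonempty $T$, the demand $\PP(E_G \mid E_T) \geq \theta$ is equivalent to the single inequality $p(T,I) \leq \frac{1-\theta}{\theta}\, p(T,I)$—more precisely $p(T,I) \leq \frac{1-\theta}{\theta}\, p(T,G)$, together with $\PP(E_T) > 0$ so that the conditional is defined.

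Next I would exhibit an explicit symmetric witness. Let $M = |\mathcal{T}_S| - 1 = 2^{|S|}-1$ be the number of nonempty transcripts, and for a small parameter $g > 0$ set, for every nonempty $T$,
\[ p(T,G) = g, \qquad p(T,I) = \frac{1-\theta}{\theta}\,g. \]
This makes $\PP(E_G \mid E_T) = \frac{g}{g + \frac{1-\theta}{\theta}g} = \theta \geq \theta$ for every nonempty $T$, while keeping $\PP(E_T) > 0$. The remaining two weights sit on the empty transcript—which is subject to no posterior-threshold constraint—and are chosen to balance the books:
\[ p(\varnothing,G) = \tfrac12 - Mg, \qquad p(\varnothing,I) = \tfrac12 - M\,\frac{1-\theta}{\theta}\,g. \]
By construction the total guilt mass is $(\tfrac12 - Mg) + Mg = \tfrac12$, the total innocence mass is $\tfrac12$, and the grand total is $1$.

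Finally I would verify feasibility. All four displayed weights are nonnegative as soon as $Mg \leq \tfrac12$ and $M\frac{1-\theta}{\theta}g \leq \tfrac12$; since $0 < \theta < 1$, the coefficient $\frac{1-\theta}{\theta}$ is a fixed positive constant, so any sufficiently small $g > 0$ works. Taking, for instance, $g = \frac{1}{4M}\min\{1,\tfrac{\theta}{1-\theta}\}$ places $g$ strictly below both bounds, which makes every weight strictly positive; in particular $\PP(E_T) > 0$ for all $T$, so every conditional probability that appears is well-defined. This completes the verification that $\PP$ satisfies $\PP(E_G) = \tfrac12$ and $\PP(E_G \mid E_T) \geq \theta$ for all nonempty $T$.

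I expect no deep obstacle here; the only point requiring care is the apparent tension between wanting every nonempty transcript to be guilt-heavy—for $\theta > \tfrac12$ each nonempty transcript has posterior guilt $\theta > \tfrac12$, which alone would push the marginal $\PP(E_G)$ above $\tfrac12$—while still pinning the marginal at exactly $\tfrac12$. The resolution, and the conceptual heart of the result, is that the empty transcript carries no posterior-threshold obligation, so its worlds $(\varnothing,G)$ and $(\varnothing,I)$ can absorb precisely enough probability mass to drag the overall prior of guilt back down to even odds. This is exactly the formal slack that (TCA) licenses, since it is (TCA) that permits populating both $(T,G)$ and $(T,I)$ for every transcript $T$ in the first place.
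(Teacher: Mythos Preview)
Your argument is correct. The explicit symmetric weighting you construct on $W_S$ does exactly what is claimed, and the feasibility check is clean; the one slip---writing $p(T,I) \leq \frac{1-\theta}{\theta}\,p(T,I)$ before correcting to $p(T,I) \leq \frac{1-\theta}{\theta}\,p(T,G)$---is caught immediately and is harmless.

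The paper proceeds differently: rather than building $\PP$ by hand, it invokes the conditional extension lemma (Lemma~\ref{lem:cond-ext}), which says that a probability charge on a Boolean algebra $\CC$ can be extended over a logically independent event $B$ so as to realize \emph{any} prescribed value of $\widetilde{\PP}(A\mid B)$. The paper's proof is thus a one-line appeal to abstract extension machinery, whereas yours is an elementary, fully explicit construction on the finite world space. Your route has the advantage of being self-contained and of exhibiting a concrete witness; the paper's route has the advantage of isolating the underlying phenomenon (freedom in conditional probabilities under logical independence) as a reusable lemma, which it then deploys again in Section~5. It is also worth noting that your construction is essentially the specialization of the explicit measure the paper later builds in the proof of Theorem~\ref{thm:rep_thm_2}: there the disposition $f$ partitions $\mathcal{T}_S$ into $f^{-1}(C)$ and $f^{-1}(A)$ and the two pieces are weighted $\theta$-to-$(1-\theta)$ and $(1-\theta)$-to-$\theta$ respectively, which collapses to your assignment when $f^{-1}(A) = \{\varnothing\}$ (up to the convex mixing parameter $\alpha$ playing the role of your $g$).
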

\begin{proof}

By the  it suffices to show that we can ensure that
\[ \PP\left(E_G \,\middle\vert\ E_T\right) \geq \theta,\]
but the conditional extension lemma (\autoref{lem:cond-ext}) ensures this.
\end{proof}

In other words, Posner's proposal--constraining only the \textit{priors} of the jurors--is only sufficient to guarantee that juror acting in accordance with Posner's rule will not convict at the outset of the trial, and moreover is compatible with \textit{guaranteed} conviction as soon as the first testimony is offered. By this result, constraining the prior probability of guilt to yield 1 to 1 odds only ensures that a juror's disposition cannot be to convict at the outset of the trial.

\section{Rationalizing Juror's Dispositions}

Having defined a formal model of a juror's reasoning in the preceding section, we are now in a position to evaluate whether Posner's view places any constraints on a threshold Bayesian juror's disposition beyond the Presumption of Innocence and the Willingness to Convict. 

Let $\theta \in (0,1)$. We call a juror disposition $f:\mathcal{T}_S \to \{C,A\}$ \textit{$\theta$-rationalizable} provided that there exists a prior $\PP_f$ on $\mathcal{P}(W)$\footnote{Where $W$ is constructed as in Section 2} such that
\[ f(T) = C \iff \PP_f(E_G|E_T) \geq \theta.\]
In other words, a disposition $f$ is $\theta$-rationalizable just in case the verdicts reached by $f$ on all transcripts $T$ can be realized as an instance of a threshold juror determining that the probability of the defendant's guilt meets or exceeds $\theta$ at a trial specified by transcript $T$.

The aim of this section is to prove a representation theorem that states that \textit{all} juror dispositions satisfying (PoI) and (WtC) are the dispositions of \textit{some} threshold Bayesian juror.

\begin{thm}\label{thm:rep_thm_2}
Suppose that $S$ is a finite collection of testimonies. Let
\[f: \mathcal{T}_S \to \{C,A\} \]
be a juror's disposition such that
\begin{enumerate} 
\item $f(\varnothing) = A$ (PoI)
\item there exists a transcript $T$ such that $f(T) = C$. (WtC)
\end{enumerate} 
Suppose that $\frac{1}{2} < \theta < 1$. Then there exists a prior $\PP_f$ on $W_S$ $\theta$-rationalizing $f$ such that
\[ \PP_f(E_G) = \frac{1}{2}.\]

Moreover, $\PP_f$ can be taken to be open-door in the sense that $\PP_f\left(E_G \,\middle\vert\ E_T\right)\notin \{0, 1\}$ for any transcript $T$.
\end{thm}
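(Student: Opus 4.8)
The plan is to build the prior $\PP_f$ directly as a point-mass assignment on the finite set $W_S = \mathcal{T}_S \times \{G,I\}$ and to reduce all three requirements to elementary constraints on two families of parameters. First I would write $\PP_f(\{(T,G)\}) = p_T^G$ and $\PP_f(\{(T,I)\}) = p_T^I$, so that a prior is nothing but a choice of nonnegative reals summing to $1$. It is cleaner to reparametrize by the \emph{marginal mass} $m_T = \PP_f(E_T) = p_T^G + p_T^I$ placed on each transcript and the \emph{conditional guilt} $q_T = \PP_f(E_G \mid E_T) = p_T^G / m_T$, since these are exactly the quantities the theorem constrains; one recovers $p_T^G = q_T m_T$ and $p_T^I = (1 - q_T) m_T$. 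In this language the requirements become: (i) $\sum_T m_T = 1$ with each $m_T > 0$ (so every conditional is defined); (ii) the $\theta$-rationalization condition $f(T) = C \iff q_T \geq \theta$; (iii) open-door, i.e.\ $0 < q_T < 1$; and (iv) the even-odds constraint $\PP_f(E_G) = \sum_T q_T m_T = \tfrac{1}{2}$.

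The construction then separates the choice of the $q_T$ (forced up to inequalities by $f$) from the choice of the $m_T$ (free, and used to satisfy the averaging constraint (iv)). Partition $\mathcal{T}_S = \mathcal{C} \sqcup \mathcal{A}$ into convict transcripts $\mathcal{C} = \{T : f(T) = C\}$ and acquit transcripts $\mathcal{A} = \{T : f(T) = A\}$. By (WtC) the set $\mathcal{C}$ is nonempty, and by (PoI) we have $\varnothing \in \mathcal{A}$, so $\mathcal{A}$ is nonempty as well; both facts will be essential. Fix any $a$ with $0 < a < \tfrac{1}{2}$ and set $q_T = \theta$ for every $T \in \mathcal{C}$ and $q_T = a$ for every $T \in \mathcal{A}$. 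Since $\tfrac12 < \theta < 1$ and $0 < a < \tfrac12 < \theta$, conditions (ii) and (iii) hold automatically, and it remains only to choose the masses.

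The one substantive point is arranging (iv). Writing $M_{\mathcal{C}} = \sum_{T \in \mathcal{C}} m_T$ and $M_{\mathcal{A}} = \sum_{T \in \mathcal{A}} m_T$, the averaging constraint collapses to $\theta M_{\mathcal{C}} + a M_{\mathcal{A}} = \tfrac12$ together with $M_{\mathcal{C}} + M_{\mathcal{A}} = 1$, which I would solve to obtain $M_{\mathcal{C}} = \frac{1/2 - a}{\theta - a}$ and $M_{\mathcal{A}} = 1 - M_{\mathcal{C}}$. Here the hypotheses pay off exactly: $a < \tfrac12$ makes the numerator positive, $\theta > a$ makes the denominator positive, and $\tfrac12 < \theta$ gives $1/2 - a < \theta - a$, so $0 < M_{\mathcal{C}} < 1$ and likewise $0 < M_{\mathcal{A}} < 1$. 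I would then spread $M_{\mathcal{C}}$ over the finitely many transcripts of $\mathcal{C}$ and $M_{\mathcal{A}}$ over those of $\mathcal{A}$ with strictly positive weight (for instance uniformly), which is possible precisely because both classes are nonempty; this secures $m_T > 0$ for all $T$ in (i).

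Finally I would verify that the resulting $p_T^G, p_T^I$ are nonnegative and sum to $1$, so that $\PP_f$ is a genuine probability measure on $W_S$, and then read off that $\PP_f(E_G \mid E_T) = q_T$ reproduces $f$ through the threshold, that $\PP_f(E_G) = \tfrac12$ by construction, and that $q_T \in (0,1)$ yields the open-door property. The only step requiring genuine care is the averaging in (iv), and the real content there is the observation that (PoI) and (WtC) jointly guarantee that \emph{both} a below-$\tfrac12$ and an above-$\tfrac12$ class-conditional guilt value are available, so that a convex combination of them can be tuned to land exactly on $\tfrac12$; because $\theta > \tfrac12$, this tuning never conflicts with the threshold inequalities that (ii) imposes.
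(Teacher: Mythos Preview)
Your proof is correct and follows essentially the same approach as the paper: partition transcripts into convict and acquit classes, assign conditional guilt $\theta$ on the former and a value below $\tfrac12$ on the latter, then choose the marginal masses so that the overall guilt probability averages to $\tfrac12$. The only cosmetic differences are that the paper fixes the acquit-class conditional guilt to be $1-\theta$ rather than a free $a\in(0,\tfrac12)$, and it phrases the mass allocation as a convex combination of two auxiliary measures rather than solving explicitly for $M_{\mathcal{C}}$.
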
 

This representation theorem shows that constraints set forth by Posner to analyze the efficiency of the trial system place no meaningful constraint whatsoever on the dispositions of the finders of fact in question beyond their nontriviality.

\begin{proof} Suppose that $f:\mathcal{T}_S \to \{C,A\}$ is a disposition satisfying the hypotheses of the theorem statement and that $\frac{1}{2} < \theta < 1$. The support of $C$ (resp. $A$) is the set $f^{-1}(C) = \{T\in \mathcal{T}_C\,|\, f(T) = C\}$ (resp. $f^{-1}(A)$). By definition, $f^{-1}(C)$ and $f^{-1}(A)$ partition $\mathcal{T}_C$, and by the assumption of the theorem they are nonempty.

We define $\PP_f$ as a weighted combination of two measures defined in terms of the supports of $C$ and $A$. Let $n_C = |f^{-1}(C)|$ and $n_A = |f^{-1}(A)|$.

Set \[\PP_{f,C}(\{(T,x)\}) = \begin{cases}
        \theta n_C^{-1} & \text{if }T\in f^{-1}(C) \text{ and } x=G, \\
        (1-\theta)n_C^{-1} & \text{if }T\in f^{-1}(C) \text{ and } x=I, \\
            0 & \text{if }T\notin f^{-1}(C).
        \end{cases} \]
and \[\PP_{f,A}(\{(T,x)\}) = \begin{cases}
        (1-\theta)n_A^{-1}  & \text{if }T\in f^{-1}(A) \text{ and } x=G, \\
        \theta n_A^{-1} & \text{if }T\in f^{-1}(A) \text{ and } x=I, \\
            0 & \text{if }T\notin f^{-1}(A).
        \end{cases}.\]

Both $\PP_{f,C}$ and $\PP_{f,A}$ induce probability measures on $W_S$. Since 
$\PP_{f,C}(E_G) = \frac{\theta n_C}{n_C} = \theta > \frac{1}{2}$ and $\PP_{f,A}(E_G) = \frac{(1-\theta) n_A}{n_A} = 1-\theta < \frac{1}{2}$ there exists some $\alpha \in (0,1)$ such that the measure
\[ \PP_f = \alpha \PP_{f,C} + (1-\alpha) \PP_{f,A} \] 
has $\PP_f(E_G) = \frac{1}{2}$.

Moreover,\[ \PP_f(E_G|E_T) = \frac{\PP_f(E_G \cap E_T)}{\PP_f(E_T)} = \frac{\PP_f(T,G)}{\PP_f((T,G),(T,I))} = 
\begin{cases}
        \theta & \text{if }T\in f^{-1}(C) \\
        (1- \theta) & \text{if }T\in f^{-1}(A)\\
        \end{cases}\]
ensuring that $\PP_f$ $\theta$-rationalizes $f$ and is open-door.\end{proof}

\section{Further Constraints on Juror's Dispositions?} 
	
Having seen that Posner's proposal puts no constraints on a juror's disposition beyond the Presumption of Innocence and the Willingness to Convict, one may attempt to rescue this account by placing further constraints on the definition of a Bayesian threshold juror to pare down the class realizable juror dispositions. The final section of this paper is intended to survey obstacles that possible additional constraints on Bayesian accounts of rational jurors face. Broadly speaking, the additional constraints fall into three separate categories:
\begin{enumerate}
    \item The \textit{uniformity} of the prior with respect to a well-chosen sample space,
    \item The \textit{objectivity} of the prior, and
    \item The rate of convergence to a verdict of ``Guilty'' of a prior.
\end{enumerate}
I argue that, ultimately, none of these constraints suffice to save the Bayesian account of rational jurors. 

\subsection{Constraint One: Mandating Uniform Priors}
 
The first potential save we will consider is to restrict the class of Bayesian threshold jurors to include only uniform priors with respect to an appropriate sample space. At first glance, a uniform prior of guilt on some (appropriately large) collection $X$ of persons is appealing. For sake of simplicity, suppose that only one person $x$ is guilty of the crime in question; that is, $E_G = \{x\}$. Then
\[ \PP(E_G) = \frac{1}{|X|}.\]
So long as $X$ has at least two members and the threshold $\theta > \frac{1}{2}$, a juror with this prior will not convict at the outset of the trial. While this is a perfectly well-defined prior, one runs into trouble with how a juror is to update her credences on the basis of witness testimony. After all, when the axiom TCA holds, the occurrence of testimony $T$ is consistent with the guilt or innocence of \textit{each} $y\in X$. This implies that the set of possible worlds in which testimony $T$ occurs is not expressible as a subset of $X$. 

Faced with this obstruction, an advocate for assigning a uniform prior probability of $\frac{1}{|X|}$ to guilt must either supply us with a sufficiently rich sample space that the ideal juror ought to update her credences according to or argue that constraining the numerical probabilities to be proportional to the size of the set of potential perpetrators is sufficient to constrain the ideal juror. The latter option is untenable, as the proof of \autoref{prop:posner} shows: merely constraining the numerical prior probability of guilt to be low poses no constraint on how rapidly the juror may converge to an assessment of guilt. 

As an illustration, suppose that the sample space $X$ consists of the residents of Manhattan. A witness testifies that the perpetrator of the crime has brown eyes. Call this testimonial event $T$. If the witness is taken to be certainly correct, the event $E_T$ is
\[E_T = \{x\in X\,|\, \text{$x$ has brown eyes} \} \subseteq X,\]
and the updated probability of the guilt of the defendant $d$ is:
\[\PP(E_G \,|\, E_T) = \begin{cases} 
      0 & d \notin E_T \\
      \frac{1}{|E_T|} & d\in E_T 
   \end{cases} \]
However, if it is acknowledged that the witness might be mistaken, the testimony does not conclusively rule out any member of $X$; in other words, 
\[E_T = X.\]
Therefore the sample space $X$ has insufficient expressive power to facilitate nontrivial probabilistic reasoning.

\subsection{Constraint Two: Requiring Objectivity of a Juror's Prior and the Principal Principle}

A second constraint one may impose on a Bayesian threshold juror is that the juror's prior be in some sense \textit{objective}. A convenient way to formalize this is bay way of Lewis' Principal Principle, which can be expressed as follows:

\begin{quote} \textbf{Principal Principle:} Assume we have a number $x$, proposition $A$, time $t$, rational agent whose evidence is entirely about times up to and including $t$, and a proposition $E$ that (a) is about times up to and including $t$ and (b) entails that the chance of $A$ at $t$ is $x$. In any such case, the agent's credence in $A$ given $E$ is $x$.  \autocite{sep-david-lewis}
\end{quote} 

The difficulty with this approach is that, in the context of a trial, a Bayesian Threshold juror will not witness an event $E$ of the sort referenced in the statement of the Principal Principle.

An excellent example of this exact issue playing out in case law can be found in \textit{State v. Spann} \autocite{1993spann}. During the course of the trial an expert witness testified, on the basis of Bayesian analysis, that from a prior probability of 50\% of paternity the defendant's blood test rendered a 96.55\% probability of paternity upon posterior updating. The court recounts the cross-examination of an expert witness by defense's counsel:
\begin{quote} 
On cross-examination defense counsel brought out the fact that the probability of paternity percentage was based on that fifty-fifty assumption. The expert described it as a ``neutral'' assumption... [h]er characterization of the evidence was that its ``purely objective'' nature was ``one of the beauties of the test''; that it ``makes no assumption other than everything is equal''; and that ``the jury simply has objective information'' ... Counsel noted that even if it were conclusively proven that defendant had been out of the country at the time when conception could have occurred, this expert still would have concluded that the probability the defendant was the father was 96.55\%. Counsel's observation was correct; the expert's opinion had no relation whatsoever to the the facts of the case. \autocite[590]{1993spann}
\end{quote}

Defense's observation was astute. There was no \textit{mathematical} error in the expert testimony, being a straightforward application of Bayes' Theorem. The expert's reliance on the ``fifty-fifty'' assumption underlies an even deeper issue: the expert testimony imposes undue constraints on the \textit{structure} of the juror's priors \textit{beyond} the simple constraints of ``guilty'' vs. ``not guilty.'' Proper application of Bayesian updating requires knowledge of the \textit{full} structure of the juror's prior, incorporating not only their prior assessment of guilt but also the other pieces of testimony they had heard, their background assumptions regarding the veracity of expert testimony, their understanding of general causal laws, and the like. 

We reconstruct the probabilistic analysis we see in \textit{State v. Spann}. Suppressed in the testimony is an underlying \textit{sample space}. The basic data of the sample space is a tuple:
\[(b_X, b_Y, p(X,Y))\]
where $b_X, b_Y$ represent the blood types of a pre-selected pair of people, $X$ and $Y$, each a member of the following: $\{A+, A-, AB+, AB-, B+, B-, O+, O- \}$ and $p$ represents the paternal relationship between the two; either ``True'' if the first person is a parent of the second, and ``False'' otherwise. This yields a sample space of size
\[ 8 \times 8 \times 2 = 128.\]
The event ``$X$ is a parent of $Y$'' has size 64 in the sample space---one half the size of the total---and the expert advises us to adopt the prior that 
\[\PP(\text{``$X$ is a parent of $Y$''}) = \frac{1}{2}.\]
This probability model cannot update on an event of the form ``$X$ was in a different country from $Y$'s mother for the 5 years before and after $Y$'s birth'' as it is not coextensive with any subset sample space: the sample space is far too coarse. Thus the expert testimony carries with it a suppressed underlying model of the possible states of the world, which with good reason are insufficient for the purpose of updating in a Bayesian manner.

Presumably, an expert advising the jury on the probability $p$ of the defendant's guilt on the basis of evidence such as DNA matching intends for their testimony would move the juror's credence of guilt to be $p$, absent any other testimony. A juror's updating of the prior in this way would, however, generally not be an instance of the Principal Principle. After all, the agent conditionalizes on testimonial data of the form ``the expert testified that the probability of the defendant's guilt is $p$,''
which by the TCA axiom is logically independent from the guilt of the defendant. 

The obstruction to applying the Principal Principle in this case is the missing premise that \textit{if} the expert testifies that the probability of the defendant's guilt is $p$ \textit{then} the probability of the defendant's guilt is $p$, a premise which would refute the TCA.

\subsection{Constraint Three: Objective Relevance Standards and Objective Likelihood Ratios}

Another potential constraint one might consider to further constrain a Bayesian threshold juror is to require that the juror's conditional updating is compatible with some notion of an objective likelihood ratio. 

Posner advances such an argument, but as we will see there is an awkward tension in Posner's analysis between objective and subjective probabilities. One might get the impression from Posner's description that there are objective probabilities governing the computation of likelihood ratios, as in the following excerpt:

\begin{quote}
    Suppose that the new piece of evidence is testimony by bystander $Z$ that he saw $X$ shoot $Y$. Suppose further that the prior odds $\Omega(H)$ are 1 to 2 that $X$ shot $Y$, while the probability that $Z$ would testify that he saw $X$ shoot $Y$ if $X$ did shoot $Y$ is $.8$ and the probability that he would testify that he saw $X$ shoot $Y$ if $X$ did not shoot $Y$ is $.1$, so that the likelihood ratio is 8. The posterior odds that $X$ shot $Y$ will therefore be 4 to 1...
    
    [A]ltering posterior odds may not have much or even any social value even if the likelihood ratio of the new evidence is high, as in our shooting example, where it was 8. The value of the evidence will depend on the prior odds and on the decision rule. Suppose that the prior odds (as a consequence of the previously presented evidence) that $X$ shot $Y$ are not 1 to 2 but 1 to 10 and that for $X$ to be held liable for the shooting the trier of fact must consider the odds that he did it to be at least 1.01 to 1 (the preponderance standard). Then the new evidence, since it would lift the posterior odds above the threshold (multiplying the prior odds by a likelihood ratio of 8 yields posterior odds of only 1 to 1.25), would have no value.
    \autocite[1486-7]{posner1998evidence}
\end{quote}

It is important to note that in general one cannot posit an agent-independent likelihood ratio of a piece of evidence $E$: if an agent's prior probability of hypothesis $H$ is $\PP(H) = \theta$, then any agent with prior probability distribution $\PP$ is assured that the likelihood ratio is at most $\frac{1}{\theta}$. In other words, the likelihood ratio afforded to evidence is \textit{inseparable} from the structure of the agent's prior probability \textit{measure} $\PP$, not just its numerical values. Therefore an attempt to save Posner's account on the basis of something like objective likelihood ratios is doomed to fail: any prescribed value $L(E,H)$ will result in inconsistent assessments of probability for many agents.

The strange hybrid of subjective and objective probabilities appears again in Posner's account of the relevance standard of the Federal Rules of Evidence (FRE 401), where Posner interprets it within an economic framework: ``In Bayesian terms, evidence is relevant if its likelihood ratio is different from one and irrelevant if it is one.'' \autocite[1522]{posner1998evidence} This Bayesian gloss fails to emphasize that the assessment of whether or not a piece of evidence is relevant in the sense that its likelihood ratio is different from 1 depends on the structure of the factfinder's prior probability measure. 

To see this, note that having a likelihood ratio $L_{\PP}(E,H)$ equal to 1 is equivalent to evidence $E$ being probabilistically independent from $H$. The measure extension lemma---\autoref{lem:cond-ext}---entails that if a piece of evidence $E$ is logically independent from all preceding evidence, then there exist probability distributions $\PP$ in which $E$ is probabilistically independent of the rest of the evidence, i.e. \textit{irrelevant}, and probability distributions in which the evidence is probabilistically dependent, i.e. \textit{relevant}. On Posner's account, FRE 401 is at best underspecified, and at worst fangless: all testimonial evidence is both \textit{potentially} relevant and \textit{potentially} irrelevant. Worse yet, even if one constrains the class of relevant testimony, the effect on the posterior probability is unconstrained.

\subsection{Constraint Four: Restricting Convergence Rates}

The final constraint we consider in this section is that one might attempt to save the Bayesian account by requiring that a juror not be too quick in reaching a verdict by limiting the degree to which any given piece of testimony can affect a juror's beliefs. For instance, one might demand that the \textit{ratio} between prior and posterior belief in guilt is bounded by some fixed amount for all testimonies $T$, e.g. by requiring that
\[ \frac{2}{3} \leq \frac{\PP(E_G|E_{t_1}\cap \dots\cap E_{t_m+1} )}{\PP(E_G|E_{t_1}\cap \dots\cap E_{t_m})} \leq \frac{3}{2}.\]
For a juror who convicts only when $\PP(E_G|E_{t_1}\cap \dots\cap E_{t_m} \cap E_{t_m+1}) > \theta > \frac{3}{4}$ and for whom $\PP(E_G) = \frac{1}{2}$, such a rule would ensure that, at least conceivably, \textit{one} piece of testimony at any time would be insufficient to render a verdict of ``Guilty'' since $\PP(E_G|E_{t_1}) < \frac{3}{4} < 1$.

That said, for any rule of the form
\[ \frac{\PP(E_G|E_{t_1}\cap \dots\cap E_{t_m+1} )}{\PP(E_G|E_{t_1}\cap \dots\cap E_{t_m})} \leq 1+ \gamma\]
where $\gamma > 0$, verdict threshold $\theta > 0$, and prior probability of guilt $\PP(E_G) = \frac{1}{2}$ there exists jurors who will convict just so long as there are 
\[m > \frac{log(2\theta)}{\log(1+\gamma)}\]
pieces of testimony, a disposition Posner would surely want no juror to have.

Similarly, it is easily conceivable that we would \textit{want} a juror to be able to convict having witnessed a single piece of testimony. For example, consider a defendant who---defying their pretrial pleading of ``Not Guilty''---has a sudden change of heart on the stand and confesses to the crime at hand, a juror can hardly be faulted in deciding to convict on that basis alone.
 
\subsection{Outlook on Further Constraints}

The candidate constraints considered in this section range from constraints regarding the uniformity, objectivity, and the rate of convergence of a Bayesian threshold juror's prior to conviction. Each of these candidates faced severe challenges, and either were mathematically ill-defined, were not operationalizable in the trial setting, or constrained the class of rational dispositions too much. I do not claim that this list of candidate constraints is exhaustive; however, the prospects of a formal solution to the problems with the threshold Bayesian account laid out by the representation theorem appear bleak.

\section{Conclusion} 

In the Introduction to this chapter, we saw a deep divide between two factions. On one side we have the anti-Bayesian current, with Tribe and the judges of the Supreme Court of Connecticut the vanguard members advocating for the \textit{inadmissibility} of Bayesian and other probabilistic forms of reasoning from the criminal trial system on the basis of a perceived conflict with the Presumption of Innocence. The opposing, Bayes-rationalist side of this dispute, exemplified by Judge Posner, claim that to the contrary that rationality \textit{requires} these forms of reasoning, lest the criminal justice system fall victim to a strain of irrationality.

The analysis of this paper suggests a mundane resolution to this dispute: there is neither harm in \textit{nor} necessity to demand a juror be Bayes rational; so long as a juror's disposition satisfies the Presumption of Innocence and the Willingness to Convict, that juror's disposition is indistinguishable from a Bayesian threshold juror's regardless of the underlying causal source of her dispositions. This result strikes at the heart of both the strongly anti-Bayesian and pro-Bayesian accounts: if you demand that Bayesian inference be banned in all its forms, there is no way to discern this on the basis of an agent's dispositions. Likewise, for the pro-Bayesian account, the representation theorem demonstrates that \textit{nothing is gained by demanding that an agent be Bayes rational}. Thus, Posner's notion of a Bayesian juror is insufficiently specified to render this debate a substantive one.

\section{Appendix: Lemmata from Probability Theory.}

A very general extension theorem \autocite[70]{rao1983charges} goes as follows:

\begin{thm} 
Let $\CC$ be a Boolean algebra of subsets of a set $\Omega$ and let $\mu: \CC \to [0,1]$ be a finitely additive, positive bounded measure. Suppose that $A \in \PP(\Omega) \setminus \mathcal{C}$. Write
\[\mu_{l}(A) = \sup\{\mu(B) \,|\, B\in \mathcal{C} \wedge B\subset A \} \]
and
\[ \mu_{u}(A) = \inf\{\mu(B) \,|\, B\in \mathcal{C} \wedge B\supset A \}.\]
Then for any $d\in [\mu_{l}(A), \mu_{u}(A)]$\footnote{Note that $\mu_{l}(A) \leq \mu_{u}(A)$ so there is always at least one extension to the measure $\mu$.}
there exists a finitely additive, positive bounded measure
\[\tilde{\mu}: \mathcal{C}\left\langle A \right\rangle \to [0,1] \]
such that 
\[ \tilde{\mu}(A) = d \qedhere \]
\end{thm}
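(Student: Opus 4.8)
The plan is to realize the desired extension as a positive linear functional on simple functions and to obtain that functional from Hahn--Banach, arranging matters so that the admissible interval $[\mu_l(A),\mu_u(A)]$ for $d$ is exactly the condition needed to dominate the functional we wish to extend. First I would record the structure of the generated algebra: every $E\in\CC\langle A\rangle$ has the form $E=(A\cap B_1)\cup(A^c\cap B_2)$ with $B_1,B_2\in\CC$, so that an extension $\tilde\mu$ is completely determined by how it splits each mass $\mu(B)=\tilde\mu(A\cap B)+\tilde\mu(A^c\cap B)$. Writing $p(B)=\tilde\mu(A\cap B)$, the task reduces to producing a finitely additive $p\colon\CC\to[0,1]$ with $p(\Omega)=d$, with $p=\mu$ on $\CC$-subsets of $A$ and $p=0$ on $\CC$-subsets of $A^c$, and dominated by $\mu$ (that is, $0\le p(B)\le\mu(B)$). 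The two boundary conditions are precisely what make $\tilde\mu(E)=p(B_1)+\mu(B_2)-p(B_2)$ independent of the chosen representation of $E$, domination gives positivity and the bound into $[0,1]$, and $\tilde\mu(A)=p(\Omega)=d$; all of these verifications are routine once $p$ is in hand.

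The substantive step is constructing such a $p$, and rather than hunt for a closed form (a naive convex interpolation of the inner and outer approximations of $A$ fails, since inner measure is only superadditive) I would pass to functionals. Let $\mathcal S$ be the real vector space of bounded $\CC\langle A\rangle$-simple functions, and on it define the sublinear functional
\[ q(g)=\inf\Bigl\{\textstyle\int h\,d\mu : h \text{ is } \CC\text{-simple},\ h\ge g\Bigr\}. \]
On the subspace $U$ spanned by the $\CC$-simple functions together with $\mathbf{1}_A$ (a genuine direct sum, since $A\notin\CC$ means $\mathbf 1_A$ is not $\CC$-simple), set $\ell(f+c\,\mathbf 1_A)=\int f\,d\mu+cd$. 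If $\ell\le q$ on $U$, then Hahn--Banach extends $\ell$ to $\Lambda\le q$ on all of $\mathcal S$; positivity of $\Lambda$ is immediate because $g\le 0$ admits the $\CC$-simple majorant $0$, forcing $q(g)\le 0$ and hence $\Lambda(g)\le 0$. Then $\tilde\mu(E):=\Lambda(\mathbf 1_E)$ is finitely additive, nonnegative, agrees with $\mu$ on $\CC$ (where $\ell$ already computes $\int\mathbf 1_B\,d\mu=\mu(B)$), satisfies $\tilde\mu(A)=\Lambda(\mathbf 1_A)=d$, and lies in $[0,1]$ since $\tilde\mu(\Omega)=\mu(\Omega)\le 1$ and $\tilde\mu$ is monotone.

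The hard part will be the domination inequality $\ell\le q$ on $U$, i.e. $\int f\,d\mu+cd\le q(f+c\,\mathbf 1_A)$ for every $\CC$-simple $f$ and every real $c$; this is the one place the hypothesis on $d$ is used, and I expect it to carry essentially all the content. I would split on the sign of $c$. For $c>0$, any $\CC$-simple $h\ge f+c\,\mathbf 1_A$ gives a nonnegative $\CC$-simple $g=h-f\ge c\,\mathbf 1_A$, so the $\CC$-set $\{g\ge c\}$ contains $A$ and thus has $\mu$-measure at least $\mu_u(A)\ge d$, yielding $\int g\,d\mu\ge c\,\mu_u(A)\ge cd$; this is where $d\le\mu_u(A)$ enters. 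For $c<0$ the symmetric computation, applied to $f-h\le |c|\,\mathbf 1_A$ whose positive part is supported on a $\CC$-subset of $A$ of measure at most $\mu_l(A)$, uses $d\ge\mu_l(A)$. The case $c=0$ is the identity $q(f)=\int f\,d\mu$. With the two inequalities verified, the remaining bookkeeping (sublinearity of $q$, finiteness, and the routine translation between positive functionals on $\mathcal S$ and finitely additive measures on $\CC\langle A\rangle$) is standard, and one may alternatively bypass the functional-analytic packaging by building $p$ through a transfinite, one-set-at-a-time Zorn argument driven by the same inner/outer bounds.
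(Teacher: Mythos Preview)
The paper does not prove this theorem; it is quoted from the literature (Rao and Rao, \emph{Theory of Charges}, p.~70) and used as a black box in the proof of the conditional extension lemma that follows. There is therefore no in-paper argument to compare your attempt against.

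That said, your Hahn--Banach argument is correct and is one of the standard routes to such charge-extension results. The sublinear upper integral $q$, the definition of $\ell$ on the span of the $\CC$-simple functions and $\mathbf 1_A$, the sign-split verification of $\ell\le q$ (which is exactly where the two hypotheses $d\le\mu_u(A)$ and $d\ge\mu_l(A)$ are spent), and the recovery of positivity from $\Lambda\le q$ all go through as you describe. One purely expository remark: your opening paragraph reduces the problem to constructing a splitting functional $p\colon\CC\to[0,1]$, but the Hahn--Banach step you then carry out produces $\tilde\mu$ directly on $\CC\langle A\rangle$ and never returns to $p$; it would be cleaner either to drop that paragraph or to close the loop by noting that $p(B)=\tilde\mu(A\cap B)$ recovers it after the fact.
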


This lemma gives necessary and sufficient conditions to extend \textit{finitely additive} measures to larger Boolean algebras. 

More generally, we have a great deal of control over extending measures to ensure certain conditional probability inequalities hold:

\begin{defn}
Let $\CC$ be a Boolean algebra on $X$. We say that $B$ is logically independent from $\CC$ provided that for all $A\in \CC$, \[A \neq X,\varnothing \rightarrow (A\cap B \neq \varnothing \wedge A^c \cap B \neq \varnothing) \qedhere \]

\end{defn}

In other words, $B$ intersects every nontrivial Boolean combination of elements of $\CC$ nontrivially. When construing the set $X$ as a set of possible worlds, this is the same as proposition $B$ being logically independent of any set of propositions in $\CC$.

\begin{lemma} \label{lem:cond-ext}
Suppose that $A\in\CC$, $\PP(A) \notin \{0,1\}$, $\PP$ a probability charge on $\CC$ and $B$ is logically independent from $\CC$. Then for all $\theta \in [0,1]$ there is an extension $\widetilde{\PP}$ of $\PP$ to $\CC\left\langle B\right\rangle$ such that 
\[ \widetilde{\PP}(A|B) = \theta. \qedhere\]
\end{lemma}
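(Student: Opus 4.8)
The plan is to reduce the problem to the choice of a single auxiliary finitely additive measure on $\CC$ and then to patch it into an extension of $\PP$ using the structure of $\CC\langle B\rangle$. The key observation is that a finitely additive extension $\widetilde{\PP}$ of $\PP$ to $\CC\langle B\rangle$ is completely determined by the set function $\nu(C):=\widetilde{\PP}(C\cap B)$ on $\CC$: every element of $\CC\langle B\rangle$ has the form $(C_1\cap B)\cup(C_2\cap B^c)$ with $C_1,C_2\in\CC$, which forces $\widetilde{\PP}((C_1\cap B)\cup(C_2\cap B^c)) = \nu(C_1) + \big(\PP(C_2)-\nu(C_2)\big)$. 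Conversely, any finitely additive $\nu\colon\CC\to[0,1]$ with $0\le\nu(C)\le\PP(C)$ for all $C$ yields, via this formula, a probability charge on $\CC\langle B\rangle$ restricting to $\PP$ on $\CC$. Granting this correspondence, the whole lemma reduces to finding such a $\nu$ with $\nu(A)/\nu(X)=\theta$ and $\nu(X)>0$, since $\widetilde{\PP}(A\mid B)=\widetilde{\PP}(A\cap B)/\widetilde{\PP}(B)=\nu(A)/\nu(X)$.

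First I would exhibit $\nu$ explicitly as a two-parameter mixture of the restricted measures $C\mapsto\PP(A\cap C)$ and $C\mapsto\PP(A^c\cap C)$, both finitely additive on $\CC$ and summing to $\PP$. Setting
\[ \nu(C) = \alpha\,\PP(A\cap C) + \beta\,\PP(A^c\cap C), \qquad \alpha,\beta\in[0,1], \]
one checks at once that $0\le\nu\le\PP$ (as $\alpha,\beta\le 1$) and that $\nu$ is finitely additive. Writing $p=\PP(A)\in(0,1)$ we get $\nu(A)=\alpha p$ and $\nu(X)=\alpha p+\beta(1-p)$, so the target condition becomes $\frac{\alpha p}{\alpha p+\beta(1-p)}=\theta$. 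For $\theta\in(0,1)$ this amounts to fixing the ratio $\alpha/\beta = \frac{\theta(1-p)}{(1-\theta)p}$, achievable with $\alpha,\beta\in(0,1]$ (scale so that the larger of the two equals $1$); for $\theta=1$ take $(\alpha,\beta)=(1,0)$, and for $\theta=0$ take $(\alpha,\beta)=(0,1)$. In every case $\nu(X)=\alpha p+\beta(1-p)>0$, so the conditional is well defined and equals $\theta$. Here the hypothesis $\PP(A)\notin\{0,1\}$ is exactly what guarantees that $p$ and $1-p$ are both positive, so that the required ratio is attainable in all cases.

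The hard part---and the only place logical independence of $B$ from $\CC$ is used---is justifying the correspondence asserted in the first paragraph, i.e. that the formula for $\widetilde{\PP}$ is well defined and finitely additive rather than a merely formal assignment. The point is that two representations $(C_1\cap B)\cup(C_2\cap B^c) = (C_1'\cap B)\cup(C_2'\cap B^c)$ of the same set force $C_1\cap B=C_1'\cap B$ and $C_2\cap B^c=C_2'\cap B^c$, equivalently $(C_1\,\triangle\,C_1')\cap B=\varnothing$ and $(C_2\,\triangle\,C_2')\cap B^c=\varnothing$; logical independence then forces $C_1\,\triangle\,C_1'=\varnothing=C_2\,\triangle\,C_2'$, so the canonical representation is unique and the assignment is unambiguous. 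A parallel computation on a disjoint pair of elements---whose disjointness likewise pushes down, via logical independence, to disjointness of the corresponding $\CC$-sets---shows finite additivity, while $\widetilde{\PP}(X)=\nu(X)+(\PP(X)-\nu(X))=1$ and $\widetilde{\PP}(C)=\PP(C)$ for $C\in\CC$. I would isolate this correspondence as the technical heart of the argument, since the mixture construction and the arithmetic for $\theta$ are then routine; it is in effect an explicit instance of the kind of extension whose existence the preceding extension theorem guarantees abstractly.
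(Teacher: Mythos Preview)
Your proof is correct and shares the same underlying decomposition as the paper's---both control $\widetilde{\PP}(A\cap B)$ and $\widetilde{\PP}(A^c\cap B)$ separately and then observe that the conditional $\rho_0/(\rho_0+\rho_1)$ sweeps out $[0,1]$---but the execution is genuinely different. The paper invokes the abstract charge-extension theorem twice (once on $\CC_A$, once on $\CC_{A^c}$) to assert the existence of extensions with prescribed values $\rho_0\in[0,\PP(A)]$ and $\rho_1\in[0,\PP(A^c)]$, and then appeals to the Intermediate Value Theorem along a line in the $(\rho_0,\rho_1)$ rectangle to hit the target $\theta$. You instead write the extension down explicitly via the auxiliary measure $\nu$, parametrize by $(\alpha,\beta)$, and solve the resulting equation for $\theta$ by hand. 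Your route is more elementary and self-contained: it uses neither the black-box extension theorem nor the IVT, and it makes transparent exactly where logical independence enters (namely, the well-definedness of the canonical form $(C_1\cap B)\cup(C_2\cap B^c)$ in $\CC\langle B\rangle$). The paper's route is shorter on the page but outsources the construction. One minor caveat: your uniqueness argument on the $B^c$ side needs that no nontrivial $D\in\CC$ satisfies $D\subseteq B$; the paper's literal definition of logical independence only asserts $D\cap B\neq\varnothing$ and $D^c\cap B\neq\varnothing$, which does not preclude $D\subseteq B$. The intended symmetric reading of ``logically independent'' does cover this, however, and the paper's own proof tacitly needs the same symmetry to obtain the full interval $[0,\PP(A)]$ for $\rho_0$, so this is a shared wrinkle rather than a defect of your argument.
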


\begin{proof}
We apply the extension theorem twice: write
\[\PP(B) = \PP(B\cap A) + \PP(B \cap A^c)\]
and for any $C\in \CC$ let $\PP_C(D) = \PP(D\cap C)$. This is an unnormalized measure on the algebra $\CC_{C} = \{C\cap D\,|\, D\in \CC\}$. When $\PP(C) \notin \{0,1\}$ the measures $\PP_C$ and $\PP_{C^c}$ are both positive bounded charges on $\CC_C$ and $\CC_{C^c}$. Now as $B$ is logically independent from $\CC$ the set $B\cap A$ (resp. $B\cap A^c$) is logically independent of $\CC_{A}$ (resp. $\CC_{A^c}$). By the charge extension theorem we may extend the charge $\PP_A$ (resp. $\PP_{A^c}$) to a charge $\widetilde{\PP_{A}}$ on $\CC_A\left\langle B\cap A\right\rangle$ (resp. $\widetilde{\PP_{A^c}}$ on $\CC_A\left\langle B\cap A^c\right\rangle$) assigning to it any value $\rho_0 \in [0,\PP(A)]$ (resp. $\rho_1 \in [0,1-\PP(A)]$). Since the algebras $\CC_A\left\langle B\cap A\right\rangle$ and $\CC_A\left\langle B\cap A^c\right\rangle$ are disjoint, we may define a probability charge $\widetilde{\PP}$ on $\CC\left\langle B\right\rangle$ via the formula
\[\widetilde{\PP} = \widetilde{\PP_{A}} + \widetilde{\PP_{A^c}}.\]

Let $\theta \in [0,1]$. Then we wish to express $\theta$ as
\[\theta = \widetilde{\PP}(A|B) = \frac{\widetilde{\PP}(A\cap B)}{\widetilde{\PP}(A\cap B) + \widetilde{\PP}(A^c\cap B)} = \frac{\rho_0}{\rho_0 + \rho_1} \]
This is a continuous function in $(\rho_0,\rho_1)$ taking on the values of $0$ ($\rho_0$ = 0 and $\rho_1 = \PP(A^c)$) and $1$ ($\rho_0 = \PP(A)$ and $\rho_1 = 0$) and so by taking a suitable line inside $[0,\PP(A)] \times [0,\PP(A^c)]$ one can apply the Intermediate Value Theorem to conclude that there are $\rho_0,\rho_1$ rendering
\[ \PP(A|B) = \theta \]
true.
\end{proof}

This proposition tells us that when adjoining a logically independent event to our algebra $\CC$, $\PP$ may be consistently extended in such a way as to render the conditional probability $\widetilde{\PP}(A|B)$ \textit{any} value whatsoever. This indicates that merely placing constraints on the \textit{numerical} values of prior probability of an event $A$ places \textit{no} constraint on the posterior probability when updating by a logically independent event except in the degenerate cases where $\PP(A) \in \{0,1\}$.

\section{Appendix: Remarks on the Utility Functions Rationalized by Threshold Bayesian Jurors}

Easwaran's score function reflects a coarse grain of accuracy. In this section we generalize his result by considering the case of epistemic scores determined by the four possible outcomes of a juror's verdict: the conviction of a guilty defendant, the conviction of an innocent defendant, the acquittal of a guilty defendant, and the acquittal of an innocent defendant. Let 
\[ S = \{GC, NGC, GA, NGA\} \]
be the state space corresponding to these four possible outcomes. A collection $\{(s,\alpha_s)\}_{s\in S}$ with each $\alpha_s \in \RR$ determines a utility function:
\[ U(s) = \sum\limits_{t\in S} \chi_t(s) \alpha_s \]
where \[\chi_t(s) = \begin{cases}
        1 & \text{if }s=t, \\
            0 & \text{otherwise.}
        \end{cases} \]
Relative to a probability measure $\PP$ on $\{G,NG\}$, the expected utility of each verdict is given by:
\[E(U;C) = \PP(E_G) \alpha_{GC} + (1-\PP(E_G))\alpha_{NGC},\]
and
\[E(U;A) = \PP(E_G) \alpha_{GA} + (1-\PP(E_G))\alpha_{NGA}.\]
To maximize expected utility, an agent will convict just in case
\[ E(U;C) \geq  E(U;A).\]
Simple algebraic manipulation shows that this occurs precisely when
\[\PP(E_G) \geq \frac{\alpha_{NGA} - \alpha_{NGC}}{\alpha_{GC}-\alpha_{NGC} - \alpha_{GA} +\alpha_{NGA}}.\]
This computation entails that being a threshold juror is rationalizability according to more flexible epistemic utility functions.

\printbibliography

\end{document}